  \newtheorem{defi}{Definition}[section]
  \newtheorem{exa}[defi]{Example}
  \newtheorem{thm}[defi]{Theorem}
\newcommand{\para}{\par\vspace{1ex plus0.2ex minus0.2ex}
\refstepcounter{defi}\textbf{\arabic{section}.\arabic{defi}}\, }
\newcommand{\cG}{{\mathcal G}}
\newcommand{\cL}{{\mathcal L}}
\newcommand{\End}{{\mathrm{End}}}
\newcommand{\dis}%{{\mathrel{\vartriangle}}}
                 {{\mathrel{\scriptstyle{\triangle}}}}
\newcommand{\notdis}{{\not\!\!\dis}}
\newcommand{\dist}{{\mathrm{dist}}}
\let\phi=\varphi
\let\theta=\vartheta
\newenvironment{proof}%[1]%
    {\begin{trivlist} \item {\sl Proof:}} %#1\/:}}%
    {{}\hfill $\square$ \end{trivlist}} %%Hans {}\hfill erg{\"a}nzt und \/ gel{\"o}scht
\begin{document}
\title{On bijections that preserve complementarity of subspaces}
\author{Andrea Blunck \and Hans Havlicek}
\date{}

%\dedication{Version: \today} % deaktivieren

\maketitle

%%%%%%%%%%%%%%%%%%%%%%%%%%%%%%%
\begin{abstract}
The set $\cG$ of all $m$-dimensional subspaces of a $2m$-dimensional
vector space $V$ is endowed with two relations, complementarity and
adjacency. We consider bijections from $\cG$ onto $\cG'$, where
$\cG'$ arises from a $2m'$-dimensional vector space $V'$. If such a
bijection $\phi$ and its inverse leave one of the relations from
above invariant, then also the other. In case $m\geq 2$ this yields
that $\phi$ is induced by a semilinear bijection from $V$ or from the
dual space of $V$ onto $V'$.
\par
As far as possible, we include also the infinite-dimensional case
into our considerations.
\par
\emph{2000 Mathematics Subject Classification:} 51A10, 51A45, 05C60.
\par
\emph{Keywords:} Distant graph, Grassmann graph, complementary
subspaces.
\end{abstract}

\section{Introduction}\label{se:introduction}

The purpose of the present article is to characterize the semilinear
bijections between vector spaces in terms of their action on certain
sets of subspaces. If $V$ is vector space (of finite or infinite
dimension) over a (not necessarily commutative) field $K$ then the
set of all subspaces $X$ of $V$ that are isomorphic to the quotient
space $V/X$ is denoted by $\cG$. We rule out all vector spaces of
finite odd dimension, since then $\cG$ is empty. It will be
convenient to turn $\cG$ into an undirected graph with vertex set
$\cG$; two vertices form an edge, whenever they are complements of
each other. This gives the \emph{distant graph} on $\cG$ (see
\ref{para:distant}). We adopt this name, as this graph is isomorphic
to the distant graph of the projective line over the ring
$\End_K(U)$, where $U$ is any element of $\cG$. Recall that two
points of a projective line over a ring are called \emph{distant} if,
and only if, they arise from a basis of the underlying module. We
shall also make use of the well known \emph{Grassmann graph} on
$\cG$; cf.\ \ref{para:adj}. It has the same set of vertices as the
distant graph, but $X,Y\in\cG$ comprise an edge if, and only if, they
are \emph{adjacent}, i.e., both $X$ and $Y$ have codimension $1$ in
$X+Y$.
 \par
Suppose now that
\begin{equation*}
 \phi:\cG\to\cG'
\end{equation*}
is an isomorphism of the distant graphs arising from vector spaces
$V$ and $V'$, respectively. So $\phi$ preserves complementarity in
both directions. It follows that $\phi$ is also an isomorphism of
Grassmann graphs; this will be shown in Theorem
\ref{thm:iso-vergleich} (a), which in turn is based upon a
characterization of adjacency in terms of the distant graph
(Theorem \ref{thm:hauptsatz}). Up to this point the dimension of
$V$ plays no role. But now we have to distinguish two cases:
\par %%%%%%% neu!
If the dimension of $V$ is finite then the isomorphisms $\cG\to\cG'$ of the
Grassmann graphs are well known: Apart from two trivial cases ($\dim V=0,2$)
they arise from semilinear bijections $V\to V'$ or from semilinear bijections
$V^*\to V'$, where $V^*$ denotes the dual space of $V$. This result is due to
\textsc{W.-L.~Chow} \cite{chow-49}. Hence for $4\leq\dim V<\infty$ the given
isomorphism $\phi$ is induced by a semilinear bijection. Also, in the finite
dimensional case, every isomorphism $\cG\to\cG'$ of Grassmann graphs is an
isomorphism of distant graphs (Theorem \ref{thm:iso-vergleich} (b)). This leads
then to a complete description of the isomorphisms of distant graphs (Theorem
\ref{thm:comp-iso}).
\par
If the dimension of $V$ is infinite then so is the dimension of $V'$,
but at this moment it remains open whether or not the isomorphism
$\phi$ from above is induced by a semilinear bijection. There are two
reasons for this: Firstly, we do not know an algebraic description of
all isomorphisms of the corresponding Grassmann graphs, secondly, an
automorphism of the Grassmann graph on $\cG$ need not be an
automorphism of the distant graph on $\cG$ (Example \ref{exa:1}).

\section{Preliminaries}\label{se:preliminaries}
\para\label{para:distant}
Let $V$ be a left vector space over a (not necessarily
commutative) field $K$ and denote by $\cG$ the set of those
subspaces $X$ of $V$ that are isomorphic to the quotient space
$V/X$. Clearly, this condition is equivalent to saying that $X$ is
isomorphic to one (and hence all) of its complements with respect
to $V$. We assume that $\cG\neq\emptyset$, but there is no other
restriction on the dimension of $V$.

So, if $\dim V$ is finite, then it is an even number $2m$, say, and
the elements of $\cG$ are just the $m$-dimensional subspaces of $V$,
whence all elements of $\cG$ form an orbit under the action of the
general linear group of $V$.

If $\dim V$ is infinite then $\cG$ is non-empty and, as before, it is
an orbit under the action of the general linear group: For, if
$\mathfrak m$ denotes the cardinality of a basis of $V$ then
$\mathfrak m+\mathfrak m=\mathfrak m$ shows that $V\times V$ is
isomorphic to $V$. But in $V\times V$ the subspaces $V\times 0$ and
$0\times V$ are complementary and isomorphic, whence
$\cG\neq\emptyset$. Now let $X,Y\in\cG$. So there are subspaces
$X_1,Y_1\in\cG$ such that $V=X\oplus X_1=Y\oplus Y_1$. Then $X$,
$X_1$, $Y$, and $Y_1$ have bases with cardinality $\mathfrak m$,
whence they are mutually isomorphic. But the direct sum of any two
$K$-linear bijections $X\to Y$ and $X_1\to Y_1$ is a $K$-linear
bijection $V\to V$ taking $X$ to $Y$.

 \para
We say that $X,Y\in\cG$ are \emph{distant} (in symbols:
$X\,\dis\,Y$) whenever they are complementary, i.e., $X\oplus
Y=V$. The \emph{distant graph} on $\cG$ is the graph whose vertex
set is $\cG$ and whose edges are the unordered pairs of distant
elements. More generally, a distant graph can be associated with
every projective line over a ring \cite[p.~108]{blu+h-01a}. The
distant graph from above is -- up to isomorphism -- the distant
graph of the projective line over a ring $\End_K(U)$, where
$U\in\cG$. For a proof we refer to \cite[Theorem 2.1]{blunck-99}.
The particular case that this endomorphism ring is a
finite-dimensional $K$-algebra is treated in \cite[2.3]{thas-71},
where distant points are said to be ``in clear position'', and in
\cite[4.5. Example~(4)]{herz-95}. The distant graph is always
connected and its diameter is given in the following table:
\begin{equation}
\begin{array}{|c|c|c|c|c|}
  \hline
  \dim V &  \,0\, & \,2\, & 4,6,\ldots & \infty \\
  \hline
  \mbox{Diameter} & 0 & 1 & 2 &3\\
  \hline
\end{array}
\end{equation}
This is immediate for $\dim V<\infty$ and follows from \cite[Theorem
5.3]{blu+h-01a} when $\dim V=\infty$. The distant graph has no loops
except for $\dim V=0$.

\para\label{para:adj}
Two elements $X,Y\in\mathcal G$ are called \emph{adjacent} (in
symbols: $X\sim Y$) if
\begin{equation}\label{eq:adjacent1}
    \dim ((X+Y)/X) = \dim ((X+Y)/Y)=1.
\end{equation}
This terminology goes back to \textsc{W.-L.\ Chow} \cite{chow-49} in
the finite-dimensional case. Clearly, adjacency is an antireflexive
and symmetric relation. The \emph{Grassmann graph} on $\cG$ is the
graph whose vertex set is $\cG$ and whose edges are the $2$-sets of
adjacent vertices. (References are given in \ref{para:lit}.) So the
graph theoretic adjacency\footnote{In order to avoid ambiguity we
shall refrain from speaking of ``adjacent vertices'' of the distant
graph.} coincides with the relation $\sim$ defined according to
(\ref{eq:adjacent1}). By a simple induction, the following formula
for the distance of $X,Y\in\cG$ in the Grassmann graph can be
obtained:
\begin{equation}\label{eq:abstand}
\begin{array}{rcl}
  \dist(X,Y)=d &\Leftrightarrow& \dim ((X+Y)/X) =
                                 \dim ((X+Y)/Y) = d
  \\
               &\Leftrightarrow&\dim (X/(X\cap Y)) =
                                \dim (Y/(X\cap Y)) = d
\end{array}
\end{equation}
If $\dim V=2m$ is finite then
\begin{equation*}
 \dim ((X+Y)/X) = \dim ((X+Y)/Y) < \infty
\end{equation*}
is true for all $X,Y\in\cG$, whence the Grassmann graph is connected.
Obviously, its diameter is equal to $m$. We refer also to
\cite[4.4]{herz-95}.
 \par
If $\dim V$ is infinite then there are always subspaces $X,Y\in\cG$
such that
\begin{equation*}
 \dim ((X+Y)/X))\neq\dim ((X+Y)/Y),
\end{equation*}
even if both dimensions are finite. Let, for example, $Y$ be a
hyperplane of $X\in\cG$. So there exists a $1$-dimensional subspace
$A$ such that $X=Y\oplus A$ and there is an $X_1\in\cG$ with
$V=X\oplus X_1$. Hence $V=Y\oplus(A\oplus X_1)$. Since $\dim Y =
\dim(A\oplus X_1)$, we obtain that $Y\in\cG$. Now, obviously,
\begin{equation*}
  \dim ((X+Y)/X))=0\neq 1=\dim ((X+Y)/Y).
\end{equation*}
This explains why both the second and the third condition in
equation (\ref{eq:abstand}) involve two equations. Also, it
follows that the Grassmann graph is not connected: In fact, the
connected component of $X\in\cG$ is formed by all subspaces
$Y\in\cG$ satisfying (\ref{eq:abstand}) for some integer $d\geq
0$. Thus, for example, every complement of $X$ or every element
$Y\in\cG$ with\footnote{We use the sign $\leq$ for the inclusion
of subspaces and reserve $<$ for strict inclusion.} $Y<X$ or $Y>X$
is not in this connected component. Moreover, for each $d\geq 0$
there are a $d$-dimensional subspace $A\leq X$ and a subspace
$B\leq X$ such that $A\oplus B=X$. Also, there are a complement
$Y$ of $X$, a $d$-dimensional subspace $C\leq Y$, and a subspace
$D$ such that $Y=C\oplus D$. Then $C\oplus B\in\cG$, since
$A\oplus D$ is an isomorphic complement, and the distance of
$C\oplus B$ and $X$ is $d$. So in this case all connected
components of the Grassmann graph have infinite diameter.
\par
If we add a loop at each vertex of the Grassmann graph then we obtain
a \emph{Pl\"ucker space} in the sense of \textsc{W.~Benz}
\cite[p.~199]{benz-92}.
 \par

 \para\label{para:pls}
Let $M,N$ be subspaces of $V$ such that there is a $Y\in\cG$ with
\begin{equation*}
  M\leq Y\leq N \mbox{ and } \dim(Y/M)=\dim(N/Y)=1.
\end{equation*}
Then
\begin{equation}\label{eq:bueschel}
  \cG[M,N]:=\{X\in\cG\mid M< X< N\}
\end{equation}
is called a \emph{pencil} in $\cG$. If $\dim V=2m$ is finite then
$\dim M=m-1$ and $\dim N=m+1$, whence $M,N\notin\cG$. However, when
$V$ is infinite-dimensional, then it follows that $M,N\in\cG$. But
the strict inclusion signs in (\ref{eq:bueschel}) guarantee that
neither $M$ nor $N$ belongs to $\cG[M,N]$. If $\cL$ denotes the set
of all pencils in $\cG$ then $(\cG,\cL)$ is easily seen to be a
partial linear space with ``point set'' $\cG$ and ``line set'' $\cL$.
Two elements of $\cG$ are adjacent if, and only if, they are distinct
``collinear points'' of the partial linear space $(\cG,\cL)$. Every
``line'' is -- up to isomorphism -- a projective line over $K$,
whence it has $\#K +1$ ``points''. There are three essentially
different cases:
 \par
$0\leq\dim V=2m\leq 2$: Then $(\cG,\cL)$ is an $m$-dimensional
projective space over $K$.
 \par
$2<\dim V=2m<\infty$: Then $(\cG,\cL)$ is an example of a
\emph{Grassmann space}. It is a connected proper partial linear
space; see \cite{bichara+t-83}.
 \par
$\dim V=\infty$: It seems that in this case the partial linear space
$(\cG,\cL)$ has not been discussed in the literature so far. An
essential difference to the previous case is that $(\cG,\cL)$ is not
connected: Two ``points'' $X,Y\in\cG$ can be joined by a ``polygonal
path'' if, and only if, they are in the same connected component of
the Grassmann graph.

\para\label{para:lit}
There is a widespread literature dealing with characterizations of
(finite) Grassmann graphs and Grassmann spaces which are based upon
the set $\cG_{m,n}$ of all $m$-dimensional subspaces of an
$n$-dimensional vector space, $2\leq m\leq n-2<\infty$. We shall not
be concerned with such results, but we refer to \cite{bichara+t-83},
\cite[p.~268--272]{brouwer+cohen+neu-89}, \cite{ferrara+m-99a},
\cite{fu+huang-94}, \cite{metsch-95}, \cite{numata-90},
\cite{shult-94}, \cite{tallini-86}, \cite{vanBon+cohen-89} and, in
addition, to the many other papers which are cited there.
\par
The problem to determine and characterize all isomorphisms (or
automorphisms) of Grassmann graphs and Grassmann spaces has been
studied by many authors. See \cite[Kapitel 5]{benz-92},
\cite{brau-88}, \cite{chow-49}, \cite[p.~81]{dieu-71},
\cite{ferrara+m-99b} \cite{havl-95}, \cite{huang-98},
\cite{kreuzer-98}, \cite{pankov-02y}, \cite{pankov-02z}, and
\cite[p.~155]{wan-96}.

\section{A characterization of adjacency}\label{se:adjacency}

\para
In order to show the announced result on isomorphisms of distant
graphs we describe the adjacency relation in terms of the distant
graph. We shall use the terminology of projective geometry, i.e., the
one-, two-, and three-dimensional subspaces of $V$ will be called
\emph{points}, \emph{lines}, and \emph{planes}, respectively.

\begin{thm}\label{thm:hauptsatz}
  For all $P,Q\in\cG$ the following statements are equivalent:
\begin{enumerate}
  \item $P$ and $Q$ are adjacent.
  \item There is an element $R\in\cG$ satisfying the following
  conditions:
\begin{eqnarray}
\label{eq:hauptsatz1}
  R&\neq& P,Q,\\
\label{eq:hauptsatz2}
  \forall X\in \cG: X\,\dis\,R
  &\Rightarrow &
  X\,\dis\, P \mbox{ or } X\,\dis\,Q.
\end{eqnarray}
\end{enumerate}
\end{thm}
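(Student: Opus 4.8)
The plan is to prove the two implications separately, working throughout with a fixed complement and concrete coordinates. For the direction (a)$\Rightarrow$(b): assuming $P\sim Q$, I would pick the line $A$ and hyperplane $M=P\cap Q$ so that $P=M\oplus A$, and similarly write $Q=M\oplus B$ with $B$ another point; here $M\leq P,Q\leq P+Q$ with all the codimension-one conditions. The natural candidate for $R$ is a third element of the pencil $\cG[M,P+Q]$, i.e.\ $R=M\oplus C$ where $C$ is a point of the plane $A+B$ distinct from $A$ and $B$ (such a point exists, and one must check $R\in\cG$ — which follows because $M\oplus A\in\cG$ and $R$ differs from it only inside the plane $A+B$, so $R$ has an isomorphic complement). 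The point of choosing $R$ in the pencil is that every subspace distant from $R$ must meet the plane $A+B$ in a point off $C$, hence in a point of the line through $A,B$ minus $C$ — but I will want something sharper. Concretely: if $X\dis R$, I would argue that $X$ cannot be distant from both $P$ and $Q$ only in the excluded case, and in fact the pencil structure forces $X$ to be distant from at least one of $P,Q$; this is the verification (\ref{eq:hauptsatz2}). The condition (\ref{eq:hauptsatz1}) is immediate since $R$ is a genuinely third point of the pencil.

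For the converse (b)$\Rightarrow$(a), I would argue contrapositively: assume $P\not\sim Q$ and show that no $R\neq P,Q$ can satisfy (\ref{eq:hauptsatz2}). Given any such $R$, the strategy is to construct an explicit $X\in\cG$ with $X\dis R$ but $X\notdis P$ and $X\notdis Q$. When $P$ and $Q$ are ``far apart'' (distance $\geq 2$ in the Grassmann graph, or incomparable in a strong sense, or — in the infinite-dimensional case — not even in the same component), there is a lot of room: one selects a complement of $R$ and then perturbs it inside suitable subspaces to make it meet both $P$ and $Q$ nontrivially while staying complementary to $R$ and staying in $\cG$ (using, as in \ref{para:distant}, that membership in $\cG$ only depends on having an isomorphic complement, which is stable under finite-dimensional perturbations). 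The genuinely delicate sub-cases are $P=Q$, and $P,Q$ with $P<Q$ or $P>Q$ or $P,Q$ complementary — but the first is excluded by hypothesis in (b) via $R\neq P,Q$ being vacuous, and the comparable/complementary cases are exactly where one must be careful to still produce the witness $X$.

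The main obstacle I anticipate is the converse direction in full generality, and specifically handling the infinite-dimensional case uniformly with the finite one. In finite dimension $2m$, ``$P\not\sim Q$'' splits cleanly into $P=Q$ (excluded) and $\dist(P,Q)\geq 2$, and in the latter case one has genuine freedom to build $X$. In infinite dimension the relation is messier: $P$ and $Q$ may fail to be adjacent because one contains the other, or because the two codimensions in (\ref{eq:adjacent1}) differ, or because they lie in different components entirely. Each of these needs its own construction of the witness $X$, and one must be vigilant that $X$ genuinely lies in $\cG$, i.e.\ that the complement one builds is isomorphic to $X$ — this is where the cardinal arithmetic $\mathfrak m+\mathfrak m=\mathfrak m$ from \ref{para:distant} does the work. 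I would organize the converse as: first dispose of the case where $P,Q$ are not both in a common pencil-like configuration by a direct distant-complement argument, then treat the case $\dim((P+Q)/P)=\dim((P+Q)/Q)=1$ is false but $P\neq Q$ by a case split on the relative position of $P$ and $Q$, producing in each case an explicit $X$ distant from $R$ but not from $P$ and not from $Q$.
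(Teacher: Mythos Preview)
Your forward direction (a)$\Rightarrow$(b) is the paper's argument: pick $R$ in the pencil $\cG[P\cap Q,P+Q]$ distinct from $P,Q$, then for any $X\dis R$ the one-dimensional quotient $(P+Q)/R$ forces $X\cap(P+Q)$ to be a point not in $P\cap Q$, hence missing $P$ or $Q$, and a short computation gives $X\dis P$ or $X\dis Q$.

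For (b)$\Rightarrow$(a) your plan diverges substantially from the paper, and the divergence is where the gap lies. You propose the contrapositive: assume $P\not\sim Q$, then for \emph{every} $R\neq P,Q$ build a witness $X\dis R$ with $X\notdis P$ and $X\notdis Q$. The difficulty is the universal quantifier over $R$: your construction of $X$ must work for an arbitrary $R\in\cG$, about which you know nothing. ``Perturb a complement of $R$ to meet $P$ and $Q$'' is not a construction; perturbing to meet $P$ may destroy complementarity to $R$ or may already force $X$ to miss $Q$, and you give no mechanism to control all three conditions simultaneously across your case split. (Also, your remark that $P=Q$ is ``excluded by hypothesis via $R\neq P,Q$ being vacuous'' is not right: when $P=Q$, condition~(b) reads ``there exists $R\neq P$ such that every complement of $R$ is a complement of $P$'', which is a genuine claim you would still have to refute.)

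The paper avoids all of this by arguing \emph{directly}: assume such an $R$ exists and use the contrapositive of (\ref{eq:hauptsatz2}) as a tool to pin down $R$ itself. One first proves two workhorse lemmas: if $0\neq A\leq P$ and $0\neq B\leq Q$ then $(A+B)\cap R\neq 0$, and dually if $P\leq A\neq V$, $Q\leq B\neq V$ then $(A\cap B)+R\neq V$ (each by exhibiting a bad $X$ if the conclusion failed). From these one extracts, in order, $P\cap Q\leq R\leq P+Q$, then $P\not\leq Q$ and $Q\not\leq P$, then $P,Q\not\leq R$, then $\dim(P/(P\cap R))=\dim(Q/(Q\cap R))=1$, then $\dim((P+Q)/R)=1$, and finally $\dim((P+Q)/P)=\dim((P+Q)/Q)=1$. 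The point is that each step is a short local argument using only the two lemmas and the previous steps, and the whole chain is dimension-agnostic: no separate treatment of the infinite-dimensional case is needed. Your contrapositive organisation would, if it can be made to work, require exactly the case analysis you anticipate as the ``main obstacle''; the paper's organisation makes that obstacle disappear.
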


\begin{proof}
(a) $\Rightarrow$ (b): By $P\sim Q$, (\ref{eq:abstand}) holds for
$d=1$. So the set $\cG[P\cap Q,P+Q]$ is a pencil containing $P,Q$.
This pencil contains an element, say $R$, which is different from $P$
and $Q$, since every pencil contains $\# K+1$ elements.
 \par
Let $X\,\dis\,R$. Then $X\cap (P+Q)=:A$ is a point, since
$\dim((P+Q)/R)=1$. We deduce from $P\cap Q< R$ that the point $A$
cannot lie in both $P$ and $Q$. So, for example,
\begin{equation}\label{eq:XcapP}
  X\cap P=0,
\end{equation}
whence $P+Q=A\oplus P$. This gives
\begin{equation}\label{eq:X+P}
  V= X\oplus R \leq X+(P+Q) = X+(A\oplus P)=(X+A)+P=X+P.
\end{equation}
By (\ref{eq:XcapP}) and (\ref{eq:X+P}), we have $X\,\dis\,P$, as
required.
 \par
(b) $\Rightarrow$ (a): We proceed by showing several assertions. Some
of them are symmetric with respect to $P$ and $Q$, so it will be
sufficient to treat the assertion for $P$.
\par
(i) The first step is to establish that if $A,B\leq V$ then
\begin{equation}\label{eq:(i)}
  0\neq A\leq P\mbox{ and } 0\neq B\leq Q
  \Rightarrow
  (A+B)\cap R\neq 0.
\end{equation}
Assume to the contrary that $(A+B)\cap R=0$. We infer that there
exists a complement $X$ of $R$ containing $A+B$, whence
\begin{eqnarray*}
 0\neq A\leq X\cap P & \Rightarrow & X\,\notdis\,P, \\
 0\neq B\leq X\cap Q & \Rightarrow & X\,\notdis\,Q,
\end{eqnarray*}
which contradicts (\ref{eq:hauptsatz2}).

(ii) Next we claim that if $A,B\leq V$ then
\begin{equation}\label{eq:(ii)}
P\leq A\neq V\mbox{ and }Q\leq B\neq V \Rightarrow(A\cap B)+R\neq V.
\end{equation}
Assume to the contrary that $(A\cap B)+ R=V$. We infer that there
exists a complement $X$ of $R$ contained in $A\cap B$, whence
\begin{eqnarray*}
  X+P\leq A\neq V & \Rightarrow & X\,\notdis\,P, \\
  X+Q\leq B\neq V & \Rightarrow & X\,\notdis\,Q,
\end{eqnarray*}
which contradicts (\ref{eq:hauptsatz2}). Clearly, if $\dim V$ is
finite then (\ref{eq:(ii)}) follows from (\ref{eq:(i)}) by the
principle of duality.
\par
(iii) We show that
\begin{equation}\label{eq:(iii)}
 P\cap Q\leq R.
\end{equation}
This is true for $P\cap Q=0$. Otherwise choose any point $A$ in
$P\cap Q$. By (\ref{eq:(i)}), applied to $A=B$, we get $A\cap R\neq
0$. Hence $A\leq R$, and since $A\leq P\cap Q$ can be chosen
arbitrarily, $P\cap Q\leq R$.
 \par
(iv) We claim that
\begin{equation}\label{eq:(iv)}
 P+Q\geq R.
\end{equation}
This is true for $P+Q=V$. Otherwise choose any hyperplane $A$ through
$P+Q$. By (\ref{eq:(ii)}), applied to $A=B$, we get $A+R\neq V$.
Hence $A\geq R$, and $P+Q\geq R$ since $P+Q$ is equal to the
intersection of all such hyperplanes.
 \par
(v) Our next assertion is that
\begin{equation}\label{eq:(v)}
  P\not\leq Q \mbox{ and }Q\not\leq P.
\end{equation}
Assume to the contrary that $P\leq Q$ so that $P< R< Q$ follows from
(\ref{eq:(iii)}), (\ref{eq:(iv)}), and (\ref{eq:hauptsatz1}). So
there is a point $B\leq Q$ with $B\not\leq R$, and there exists a
complement $X$ of $R$ containing $B$. By the law of modularity,
\begin{equation*}
  (P+X)\cap R = P + (X\cap R) = P+0 = P\neq R,
\end{equation*}
whence $P+X\neq V$. Consequently, $X\,\notdis\,P$. Furthermore,
$0\neq B\leq X\cap Q$ shows that $X\,\notdis\,Q$. Altogether, this
contradicts (\ref{eq:hauptsatz2}).
\par
(vi) We continue by showing that
\begin{equation}\label{eq:(vi)}
 P\not\leq R\mbox{ and }Q\not\leq R.
\end{equation}        %%%%%%%%%neu!
Assume to the contrary that $P\leq R$, whence (\ref{eq:hauptsatz1}) yields $P<
R$. We know from (\ref{eq:(v)}) that $P\not\leq Q$, whence there is a
hyperplane $H$ containing $Q$ with $P\not\leq H$. Thus $V=P+H\leq R+H\leq V$.
So there is a complement $X$ of $R$ which lies in $H$. We have $X+Q\leq H$ and,
consequently, $X\,\notdis\,Q$. As in the proof of equation (\ref{eq:(v)}), the
strict inclusion $P< R$ implies that no complement of $R$ can be a complement
of $P$. This gives $X\,\notdis\,P$ which is absurd by (\ref{eq:hauptsatz2}).
 \par
(vii) Now it is our task to verify that
\begin{equation}\label{eq:(vii)}
 \dim (P/(P\cap R))=\dim (Q/(Q\cap R))=1.
\end{equation}
By (\ref{eq:(vi)}), $P\not\leq R$ so that $P\cap R\neq P$ and $\dim
P\geq 1$. So, for $\dim P=1$, we obtain $\dim (P/(P\cap R))=1$. If
$\dim P\geq 2$ then it suffices to show that the inequality
\begin{equation*}
 L\cap(P\cap R)\neq 0
\end{equation*}
holds for all lines $L\leq P$. We deduce from (\ref{eq:(vi)}) that
there is a point $B\leq Q$ with $B\not\leq R$. By (\ref{eq:(iii)}),
$B\not\leq P$ and so for every line $L\leq P$ the subspace $L\oplus
B$ is a plane. Let $A_1,A_2\leq L\leq P$ be distinct points. Each
point $A_i$ together with the point $B\leq Q$ meets the requirements
of (\ref{eq:(i)}). This shows that
\begin{equation*}
  C_i:=(A_i\oplus B)\cap R
\end{equation*}
is a point other than $B$ for $i=1,2$. As $A_1$ and $A_2$ are
different, so are $C_1$ and $C_2$. Hence the subspace $C_1\oplus
C_2\leq R$ is a line. But $L$ and $C_1\oplus C_2$ are coplanar,
whence they have a common point which lies in $L\cap (P\cap R)$. (See
the figure below which illustrates the general case when $A_1\neq
C_1$ and $A_2\neq C_2$.)
\begin{center}
  \unitlength5mm
\begin{picture}(7,8)(0,-2)%um 2 nach oben geschoben ...
\thinlines
  \put(0,0){\line(1,0){6}}%%horizontal
  \put(0,0){\line(3,1){6}}%%schraege Projektionsgerade
\thicklines
  \put(6,-1){\line(0,1){4.75}}%%vertikal bis Kreis
  \put(6,4.25){\line(0,1){1.0}}%%vertikla ab Kreis

  \put(1,-1){\line(1,1){4.81}}%% 45 Grad bis Kreis
  \put(6.19,4.19){\line(1,1){0.8}} %%45 Grad ab Kreis, min. Laenge!!
\thinlines

  \put(0,0){\circle*{0.3}} %% istDurchmesser nicht Radius

  \put(6,4){\circle{0.25}}
  \put(6,4){\circle{0.5}}

  \put(6,0){\circle*{0.2}}
  \put(2,0){\circle*{0.2}}
  \put(6,2){\circle*{0.2}}
  \put(3,1){\circle*{0.2}}
  \put(-0.3, 0.3){$B$}
  \put(2.1,-0.9){$C_1$}
  \put(2.2, 1.4){$C_2$}
  \put(6.2,-0.4){$A_1$}
  \put(6.2, 1.8){$A_2$}
  \put(-0.45,-1.9){$C_1\oplus C_2$}
  \put(5.8,-1.9){$L$}
\end{picture}
\end{center}

\par
(viii) The penultimate step is to show that
\begin{equation}\label{eq:(viii)}
  \dim ((P+Q)/R)=1.
\end{equation}
By (\ref{eq:(vii)}) there are points $A,B$ with
\begin{eqnarray*}
 P&=&(P\cap R)\oplus A,\\
 Q&=&(Q\cap R)\oplus B.
\end{eqnarray*}
We cannot have $A=B$, since then (\ref{eq:(iii)}) would give $A=B\leq
P\cap Q\leq R$ which is impossible because obviously
\begin{equation*}
  A,B\not\leq R.
\end{equation*}
So (\ref{eq:(i)}) yields that $C:= (A\oplus B)\cap R$ is a point. As
the point $C$ lies in $R$, it is different from $A$ and $B$. Next, it
follows that
\begin{equation*}
\begin{array}{rcll}
  P+Q & =         & (P\cap R) + (Q\cap R)+A+B &\\
      & =         & (P\cap R) + (Q\cap R)+C+B &\mbox{(by the exchange lemma)}\\
      & \leq & R +C+B &\mbox{(by $P\cap R\leq R$ and $Q\cap R\leq R$)}\\
      & = & R\oplus B &\mbox{(by $C\leq R$ and $B\not\leq R$)}\\
      & \leq & P+Q &\mbox{(by (\ref{eq:(iv)}) and $B\leq Q$)}.
\end{array}
\end{equation*}
%%We read off from (\ref{eq:(vi)}) that $R\neq P+Q$.
Therefore $P+Q=R\oplus B$.
\par
(ix) Now, finally, we are in a position to show that $P$ and $Q$ are
adjacent. This is equivalent, by definition, to
\begin{equation}\label{eq:(ix)}
  \dim((P+Q)/P)=\dim((P+Q)/Q)=1.
\end{equation}     %%%%% neu!
We infer from the proof of (\ref{eq:(viii)}) that there is a point $B\leq Q$
such that $P+Q=R\oplus B$. Denote by $D$ a complement of $P+Q$. (The following
is trivial if $P+Q=V$, since then $D=0$.) So we get
\begin{equation*}
   V=(P+Q)\oplus D = (R\oplus B)\oplus D,
\end{equation*}
whence $X:=D\oplus B$ is a complement of $R$. But $B\leq X\cap Q$
yields $X\,\notdis\,Q$. Now (\ref{eq:hauptsatz2}) forces that
$X\,\dis\,P$. So, the three formulas
\begin{eqnarray*}
  V=X\oplus P &=& D\oplus (P\oplus B),\\
  V&=& D\oplus (P+Q),\\
  P\oplus B &\leq& P+Q,
\end{eqnarray*}
together yield that $P\oplus B = P+Q$, whence $\dim((P+Q)/P)=1$, as
required.
 \par
This completes the proof.
\end{proof}

Let us remark that the adjacency relation can be expressed in terms
of the distant graph in a trivial way in the following cases: For
$\dim V=0$ we have $X\sim Y \Leftrightarrow X\,\notdis\,Y$, since
both sides are identically false. For $\dim V=2$ we have $X\sim Y
\Leftrightarrow X\,\dis\,Y$. For $\dim V=4$ we have $X\sim Y
\Leftrightarrow X\,\notdis\,Y\neq X$.

\section{Isomorphisms of distant graphs}\label{se:isomorphisms}

 \para\label{para:semilin}
Suppose now that $V$ and $V'$ are left vector spaces over $K$ and
$K'$, respectively. As before, we assume that neither $\cG$ nor
$\cG'$ is empty. Let $f:V\to V'$ be a semilinear bijection. Such an
$f$ induces a bijection $\mathcal G\to \mathcal G'$ by $X\mapsto
X^f$.
 \par
Let $V^*$ denote the dual vector space of $V$; we consider $V^*$ as a
right vector space over $K$ and we assume now that $\dim V<\infty$.
If $f:V^*\to V'$ is a semilinear bijection (with respect to an
antiisomorphism $K\to K'$) then $X\mapsto (X^\perp)^f$ is a bijection
$\cG\to\cG'$, since the annihilator map $\perp$ yields a bijection of
$\cG$ onto $\cG^*$.
 \par
In both cases this bijection $\cG\to\cG'$ is an isomorphism of
distant graphs and an isomorphism of Grassmann graphs. Furthermore,
it is a collineation of the partial linear space $(\cG,\cL)$ onto
$(\cG',\cL')$.

\begin{thm}\label{thm:iso-vergleich}
Suppose that $V$ and $V'$ are left vector spaces over $K$ and $K'$,
respectively. Then the following assertions hold:
\begin{enumerate}
  \item
If $\phi:\cG\to\cG'$ is an isomorphism of distant graphs then it is
also an isomorphism of Grassmann graphs.
  \item
Suppose, moreover, that $\dim V=2m$ is finite. If $\phi:\cG\to\cG'$
is an isomorphism of Grassmann graphs then it is also an isomorphism
of distant graphs.
\end{enumerate}
\end{thm}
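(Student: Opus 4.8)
The plan is to dispose of the two parts separately, reducing (a) to the adjacency criterion of Theorem~\ref{thm:hauptsatz} and (b) to the distance formula~(\ref{eq:abstand}); neither step requires new geometry, only a careful transport of conditions along the bijection $\phi$.

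For (a), I would start from the definition: an isomorphism of distant graphs is a bijection $\phi$ with $X\,\dis\,Y\Leftrightarrow X^\phi\,\dis\,Y^\phi$ for all $X,Y\in\cG$, so that also $X\neq Y\Leftrightarrow X^\phi\neq Y^\phi$ and the biconditional for $\dis$ is available in both directions. Fix $P,Q\in\cG$. By Theorem~\ref{thm:hauptsatz}, $P\sim Q$ holds exactly when there is an $R\in\cG$ with $R\neq P,Q$ such that every $X\in\cG$ with $X\,\dis\,R$ satisfies $X\,\dis\,P$ or $X\,\dis\,Q$. I would then check that $R^\phi$ witnesses the same criterion for $P^\phi,Q^\phi$: indeed $R^\phi\neq P^\phi,Q^\phi$; and if $X'\in\cG'$ satisfies $X'\,\dis\,R^\phi$, then, writing $X'=X^\phi$ with $X\in\cG$, we get $X\,\dis\,R$, hence $X\,\dis\,P$ or $X\,\dis\,Q$, hence $X'\,\dis\,P^\phi$ or $X'\,\dis\,Q^\phi$. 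Thus $P\sim Q\Rightarrow P^\phi\sim Q^\phi$. Applying the same argument to the isomorphism of distant graphs $\phi^{-1}$ yields the converse, so $\phi$ is an isomorphism of Grassmann graphs.

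For (b), suppose $\dim V=2m<\infty$. For $X,Y\in\cG$ one has $\dim\big(X/(X\cap Y)\big)=m-\dim(X\cap Y)$, so by~(\ref{eq:abstand}) the relation $X\,\dis\,Y$ (equivalently $X\cap Y=0$, equivalently $X\oplus Y=V$) holds if and only if $\dist(X,Y)=m$, and $m$ is the diameter of the Grassmann graph on $\cG$, which is connected. Since $\phi$ is an isomorphism of Grassmann graphs, the Grassmann graph on $\cG'$ is connected as well; by the discussion in Section~\ref{se:preliminaries} this forces $\dim V'=2m'<\infty$. A graph isomorphism preserves distances, hence diameters, so $m'=m$ and $\dist(X,Y)=m\Leftrightarrow\dist(X^\phi,Y^\phi)=m$ for all $X,Y$. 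Chaining, $X\,\dis\,Y\Leftrightarrow\dist(X,Y)=m\Leftrightarrow\dist(X^\phi,Y^\phi)=m\Leftrightarrow X^\phi\,\dis\,Y^\phi$, so $\phi$ is an isomorphism of distant graphs.

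I expect the only delicate point to be the bookkeeping rather than any substantial argument: in (a) one has to make sure the universally quantified clause in Theorem~\ref{thm:hauptsatz}(b) transports correctly, which is precisely where surjectivity of $\phi$ and the two-sidedness of $X\,\dis\,Y\Leftrightarrow X^\phi\,\dis\,Y^\phi$ enter; in (b) one must first exclude $\dim V'=\infty$ via connectedness before identifying complementarity with ``maximal distance'' in the Grassmann graph. The low-dimensional cases are handled by the same reasoning (and, as recorded after Theorem~\ref{thm:hauptsatz}, the adjacency criterion simplifies there anyway).
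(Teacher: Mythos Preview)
Your proposal is correct and follows essentially the same approach as the paper: part~(a) is the unpacking of ``an immediate consequence of Theorem~\ref{thm:hauptsatz}'' exactly as the authors intend, and part~(b) reproduces the paper's argument of transporting connectedness and diameter via the Grassmann-graph isomorphism to get $\dim V'=2m$ and then identifying $\dis$ with Grassmann distance $m$. The only difference is that you have spelled out the details (transport of the witness $R$ in~(a), the chain of equivalences in~(b)) that the paper leaves implicit.
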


\begin{proof}
(a) This is an immediate consequence of the characterization of
adjacency in terms of the distant graph given in Theorem
\ref{thm:hauptsatz}.
\par
(b) Let $\phi:\cG\to\cG'$ be an isomorphism of Grassmann graphs. By
$\dim V=2m$ and \ref{para:adj}, the Grassmann graph on $\cG$ is
connected and its diameter is $m$. By virtue of $\phi$, the Grassmann
graph on $\cG'$ is also connected and it has the same diameter $m$.
We read off from \ref{para:adj} that $\dim V'=2m$. For all
$X,Y\in\cG$ we have $X\,\dis\,Y$ if, and only if, $X$ and $Y$ are at
distance $m$ in the Grassmann graph on $\cG$. Hence $X^\phi$ and
$Y^\phi$, too, are at distance $m$ in the Grassmann graph on $\cG'$
which in turn is equivalent to $X^\phi\,\dis\,Y^\phi$.
\end{proof}

We refer to \cite{pamb-00} for the logical background of our
reasoning in part (a) of the previous proof. For $\dim V=\infty$ the
assertion in (b) need not be true:

\begin{exa}\label{exa:1}
Let $\dim V=\infty$ and suppose that $P\,\dis\,Q$. Choose points
$A\leq P$ and $B\leq Q$. There are subspaces $A_1$ and $B_1$ such
that
\begin{equation*}
  P=A\oplus A_1,\; Q=B\oplus B_1.
\end{equation*}
%%% neu! R und R_1 in Def vertauscht, R_1 kommt weg.
Then $R:=B\oplus A_1\in\cG$, since it is isomorphic and complementary to
$A\oplus B_1$. There exists a $K$-linear bijection $f:V\to V$ taking $P$ to
$R$. We define the following map:
\begin{equation*}
  \phi:\cG\to\cG :
  \left\{
  \begin{array}{ll}
    X\mapsto X^f & \mbox{if }\dim (P/(X\cap P))=\dim (X/(X\cap P))<\infty \\
    X\mapsto X & \mbox{otherwise} \
  \end{array}
  \right.
\end{equation*}
This means that $f$ is applied to all elements of the connected
component of $P$ in the Grassmann graph, whereas all other elements
of $\cG$ remain fixed. As $f$ preserves adjacency and non-adjacency,
the connected component of $P$, which coincides with the connected
component of $R$, is mapped bijectively onto itself. So $\phi$ is an
automorphism of the Grassmann graph. However, $\phi$ is not an
automorphism of the distant graph, since
$P^\phi=R\,\notdis\,Q=Q^\phi$.
\end{exa}

\begin{thm}\label{thm:comp-iso}
Let $V$ and $V'$ be left vector spaces over $K$ and $K'$,
respectively, where $\dim V=2m$ is finite. A bijection
$\phi:\cG\to\cG'$ is an isomorphism of distant graphs if, and only
if, one of the following assertions holds:
\begin{enumerate}
  \item
$\dim V=\dim V'=0$.
  \item
$\dim V=\dim V'=2$ and $\# K=\# K'$.
  \item
$4\leq \dim V=\dim V'=2m<\infty$ and there is either a semilinear
bijection $f:V\to V'$ such that $X^\phi=X^f$ or a semilinear
bijection $f:V^*\to V'$ such that $X^\phi=(X^\perp)^f$.
\end{enumerate}
\end{thm}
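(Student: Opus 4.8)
The plan is to deduce the theorem from the machinery already built up: Theorem~\ref{thm:iso-vergleich}, the structural remarks on the Grassmann graph in \ref{para:adj}, Chow's theorem \cite{chow-49} as quoted in the introduction, and the elementary observations recorded in \ref{para:semilin}. The proof naturally splits into a ``sufficiency'' part (each of (a), (b), (c) produces a distant-graph isomorphism) and a ``necessity'' part (every distant-graph isomorphism falls under one of (a), (b), (c)).

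For the sufficiency I would argue case by case. In case~(a) the sets $\cG$ and $\cG'$ each reduce to the single vertex $0$, which carries a loop, so the unique bijection between them is trivially an isomorphism of distant graphs. In case~(b) the set $\cG$ is the point set of the projective line over $K$, and any two of its distinct elements are complementary in $V$, so the distant graph on $\cG$ is the complete graph on $\#K+1$ vertices; likewise the distant graph on $\cG'$ is complete on $\#K'+1$ vertices, and since $\#K=\#K'$ every bijection $\phi$ is an isomorphism. In case~(c) nothing is left to prove once \ref{para:semilin} is invoked, where it is observed that a semilinear bijection $f:V\to V'$ induces an isomorphism of distant graphs via $X\mapsto X^f$, and a semilinear bijection $f:V^*\to V'$ does the same via $X\mapsto(X^\perp)^f$.

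For the necessity, start from an isomorphism $\phi:\cG\to\cG'$ of distant graphs. First, by Theorem~\ref{thm:iso-vergleich}~(a), $\phi$ is automatically an isomorphism of Grassmann graphs. Next I would pin down $\dim V'$: since $\dim V=2m<\infty$, the Grassmann graph on $\cG$ is connected of diameter $m$ by \ref{para:adj}, and both properties are carried over by $\phi$ to the Grassmann graph on $\cG'$; reading \ref{para:adj} backwards, connectedness forces $\dim V'$ to be finite, and then its diameter is half its dimension, so $\dim V'=2m$. Now distinguish three cases according to $m$. If $m=0$ we are in case~(a). If $m=1$, then $\cG$ and $\cG'$ are projective lines and the mere existence of the bijection $\phi$ yields $\#K+1=\#K'+1$, i.e.\ case~(b). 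If $m\ge2$, then $2\le m\le 2m-2$, so Chow's theorem applies to the Grassmann-graph isomorphism $\phi$ between the $m$-subspaces of $V$ and the $m$-subspaces of $V'$; because here the dimension $m$ equals the codimension $2m-m$, both families of maps can occur, and $\phi$ is induced either by a semilinear bijection $f:V\to V'$ with $X^\phi=X^f$ or by a semilinear bijection $f:V^*\to V'$ with $X^\phi=(X^\perp)^f$. This is case~(c), which completes the argument.

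I expect the only genuine obstacle to be book-keeping rather than mathematics. The diameter of the distant graph by itself only certifies that $\dim V'$ is finite and at least $4$, so one really must pass to the Grassmann graph and use its diameter $m$ in order to conclude $\dim V'=2m$ exactly; and the small cases $m=0,1$, where Chow's theorem is not available, have to be handled by hand, with the field cardinalities matched directly in the case $m=1$. Everything else is a routine citation of the results established earlier.
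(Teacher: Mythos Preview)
Your proposal is correct and follows essentially the same route as the paper: invoke Theorem~\ref{thm:iso-vergleich} to pass to a Grassmann-graph isomorphism, use the connectedness and diameter of the Grassmann graph (as in the proof of Theorem~\ref{thm:iso-vergleich}~(b)) to force $\dim V'=2m$, handle $m=0,1$ directly, and quote Chow for $m\geq 2$; sufficiency via \ref{para:semilin} and the trivial small cases. The only difference is that you spell out the diameter argument and the small cases more explicitly than the paper, which simply refers back to the proof of Theorem~\ref{thm:iso-vergleich}~(b).
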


\begin{proof}
Let $\phi$ be an isomorphism of distant graphs. By Theorem
\ref{thm:iso-vergleich}, $\phi$ is an isomorphism of Grassmann
graphs. Furthermore, we see from the proof of Theorem
\ref{thm:iso-vergleich} (b) that $\dim V=\dim V'=2m$.
 \par
If $\dim V=2$ then the distant graph on $\cG$ is a complete graph
with $\# K +1$ vertices. Hence the same properties are shared by the
isomorphic distant graph on $\cG'$. Therefore $\# K=\# K'$.
\par
If $\dim V\geq 4$ then the assertion follows from a theorem due to
\textsc{W.L.~Chow} on the isomorphisms of Grassmann graphs. See
\cite{chow-49} or \cite[p.~81]{dieu-71}.
\par
The converse is trivially true, if one of the assertions (a) or (b)
is satisfied. If (c) holds then $\phi$ is an isomorphism of distant
graphs according to \ref{para:semilin}.
\end{proof}

So only the case $\dim V=\dim V'=\infty$ remains open. In view of
Theorem \ref{thm:iso-vergleich} (a) and Example \ref{exa:1}, a
promising strategy could be as follows: First, describe all
isomorphisms of Grassmann graphs and then single out the isomorphisms
of distant graphs.
 \par
Another problem is as follows: Suppose that $\dim V\geq 4$ and that
$\phi:\cG\to\cG'$ is a bijection such that
\begin{equation*}
  X\,\dis\,Y \Rightarrow  X^\phi\,\dis\,Y^\phi
\end{equation*}
for all $X,Y\in\cG$. Is such a $\phi$ an isomorphism of distant
graphs? By \cite[Theorem 5.1]{blu+h-00c}, the answer is affirmative
if $\dim V=\dim V'=4$.

%%\bibliographystyle{plain}
%%\bibliography{d:/forschung/separata/ketten}

\begin{thebibliography}{10}
{\small

\bibitem{benz-92}
W.~Benz.
\newblock {\em Geometrische Transformationen}.
\newblock Bibl.\ Institut, Mannheim, 1992.

\bibitem{bichara+t-83}%%%% neu!
A.\ Bichara and G.~Tallini.
\newblock On a characterization of {G}rassmann space representing the
$h$-dimensional subspaces in a projective space. Combinatorics '81 (Rome 1981),
\newblock {\em Ann.\ Discrete Math.}, 18:113--131, 1983.

\bibitem{blunck-99}
A.~Blunck.
\newblock Regular spreads and chain geometries.
\newblock {\em Bull.\ Belg.\ Math.~Soc.\ Simon Stevin}, 6:589--603, 1999.

\bibitem{blu+h-00c}
A.\ Blunck and H.~Havlicek.
\newblock Projective representations {II}.\ {G}eneralized chain geometries.
\newblock {\em Abh.\ Math.\ Sem.\ Univ.\ Hamburg}, 70:301--313, 2000.

\bibitem{blu+h-01a}
A.\ Blunck and H.~Havlicek.
\newblock The connected components of the projective line over a ring.
\newblock {\em Adv.\ Geom.}, 1:107--117, 2001.

\bibitem{brau-88}
H.~Brauner.
\newblock {\"U}ber die von {K}ollineationen projektiver {R}\"aume induzierten
  {G}eradenabbildungen.
\newblock {\em Sb.\ \"osterr.\ Akad.\ Wiss, Abt.\ II, math.\ phys.\ techn.\
  Wiss.}, 197:327--332, 1988.

\bibitem{brouwer+cohen+neu-89}
A.E.\ Brouwer, A.M.\ Cohen, and A.~Neumaier.
\newblock {\em Distance-Regular Graphs}.
\newblock Springer, Berlin Heidelberg New York, 1989.

\bibitem{chow-49}
W.-L. Chow.
\newblock On the geometry of algebraic homogeneous spaces.
\newblock {\em Ann.\ of Math.}, 50(1):32--67, 1949.

\bibitem{dieu-71}
J.A. Dieudonn\'e.
\newblock {\em La G\'eom\'etrie des Groupes Classiques}.
\newblock Springer, Berlin Heidelberg New York, 3rd edition, 1971.

\bibitem{ferrara+m-99a}
E.\ Ferrara~Dentice and N.~Melone.
\newblock On the incidence geometry of {G}rassmann spaces.
\newblock {\em Geom.\ Dedicata}, 75:19--31, 1999.

\bibitem{ferrara+m-99b}%%%% neu!
E.\ Ferrara~Dentice and N.~Melone.
\newblock Sugli isomorfismi tra spazi di {G}rassmann.
\newblock {\em Boll.\ Unione\ Mat.\ Ital.\ Sez.\ B (8)}, 2:655--661, 1999.

\bibitem{fu+huang-94}
T.-S. Fu and T.~Huang.
\newblock A unified approach to a characterization of {G}rassmann graphs and
  bilinear form graphs.
\newblock {\em Eur.\ J.\ Comb.}, 15:363--373, 1994.

\bibitem{havl-95}
H.~Havlicek.
\newblock On isomorphisms of {G}rassmann spaces.
\newblock {\em Mitt.\ Math.\ Ges.\ Hamburg}, 14:117--120, 1995.

\bibitem{herz-95}
A.~Herzer.
\newblock Chain geometries.
\newblock In F.\ Buekenhout, editor, {\em Handbook of Incidence Geometry},
  pages 781--842. Elsevier, Amsterdam, 1995.

\bibitem{huang-98}
W.-l. Huang.
\newblock Adjacency preserving transformations of {G}rassmann spaces.
\newblock {\em Abh.\ Math.\ Sem.\ Univ.\ Hamburg}, 68:65--77, 1998.

\bibitem{kreuzer-98}
A.~Kreuzer.
\newblock On isomorphisms of {G}rassmann spaces.
\newblock {\em Aequationes Math.}, 56:243--250, 1998.

\bibitem{metsch-95}
K.~Metsch.
\newblock A characterization of {G}rassmann graphs.
\newblock {\em Eur.\ J.\ Comb.}, 16:639--644, 1995.

\bibitem{numata-90}
M.~Numata.
\newblock A characterization of {G}rassmann and {J}ohnson graphs.
\newblock {\em J.\ Comb.\ Theory, Ser.\ B}, 48:178--190, 1990.

\bibitem{pamb-00}
V.~Pambuccian.
\newblock A logical look at characterizations of geometric transformations
  under mild hypotheses.
\newblock {\em Indag. Math.}, 11:453--462, 2000.

\bibitem{pankov-02y}%% NEU!
M.~Pankov.
\newblock A characterization of polarities.
\newblock {\em Geom.\ Dedicata}, in print.

\bibitem{pankov-02z}
M.~Pankov.
\newblock Transformations of {G}rassmannians and automorphisms of classical
  groups.
\newblock {\em J.\ Geom.}, 75:132--150, 2002.

\bibitem{shult-94}
E.E. Shult.
\newblock A remark on {G}rassmann spaces and half-spin geometries.
\newblock {\em Eur.\ \ J.\ Comb.}, 15:47--52, 1994.

\bibitem{tallini-86}
G.~Tallini.
\newblock Partial line spaces and algebraic varieties.
\newblock {\em Symp.\ Math.}, 28:203--217, 1986.

\bibitem{thas-71}
J.A. Thas.
\newblock The $m$-dimensional projective space ${S_m(M_n(GF(q)))}$ over the
  total matrix algebra ${M_n(GF(q))}$ of the $n\times n$-matrices with elements
  in the {G}alois field ${GF(q)}$.
\newblock {\em Rend.\ Mat.\ Roma (VI)}, 4:459--532, 1971.

\bibitem{vanBon+cohen-89}
J.\ van Bon and A.M. Cohen.
\newblock Linear groups and distance-transitive graphs.
\newblock {\em Eur.\ \ J.\ Comb.}, 10:399--411, 1989.

\bibitem{wan-96}
Z.-X. Wan.
\newblock {\em Geometry of Matrices}.
\newblock World Scientific, Singapore, 1996.

}%%end small
\end{thebibliography}

Andrea Blunck\\ Fachbereich Mathematik\\ Universit\"at Hamburg\\
Bundesstra{\ss}e 55\\ D--20146 Hamburg\\ Germany\\ email:
\texttt{andrea.blunck@math.uni-hamburg.de}
 \\[1em]
Hans Havlicek\\ Institut f\"ur Geometrie\\ Technische Universit\"at\\
Wiedner Hauptstra{\ss}e 8--10\\ A--1040 Wien\\ Austria\\ email:
\texttt{havlicek@geometrie.tuwien.ac.at}

\end{document}